\definecolor{lightgrey}{rgb}{0.7,0.7,0.7}
\newcommand{\defm}[1]{{\it #1}}
\newcommand{\Z}{\mathds{Z}}
\newcommand{\R}{\mathds{R}}
\newcommand{\N}{\mathds{N}}
\newcommand{\eps}{\epsilon}
\DeclareMathOperator{\graph}{graph}
\DeclareMathOperator{\BV}{BV}
\newcommand{\subeq}[2]{\mathord{\underbrace{\mathop{#1}}_{#2}}}
\newcommand{\topref}[2]{\overset{\text{\myeqref{#1}}}{#2}}
\newcommand{\myeqref}[1]{\eqref{#1}}
\newcommand{\dnconv}{\searrow}
\newtheorem{lemma}{Lemma}
\newtheorem{proposition}{Proposition}
\newtheorem{corollary}{Corollary}
\newtheorem{theorem}{Theorem}
\newtheorem{definition}{Definition}
\theoremstyle{remark}
\newtheorem{remark}{Remark}
\newtheorem{example}{Example}
\title{H\"older continuity and differentiability on converging subsequences\footnote{This material is based upon work partially supported by the
National Science Foundation under Grant No.\ NSF DMS-1054115 and by a Sloan Foundation Research Fellowship.}}
\author{Volker Elling}
\date{}
\begin{document}

\maketitle

\begin{abstract}
    It is shown that an arbitrary function from $D\subset \R^n$ to $\R^m$ will become $C^{0,\alpha}$-continuous in almost every $x\in D$
    after restriction to a certain subset with limit point $x$.
    For $n\geq m$ differentiability can be obtained.
    Examples show the H\"older exponent $\alpha=\min\{1,\frac{n}{m}\}$ is optimal.
\end{abstract}

\section{Motivation}

Many applications require considering solutions of differential equations that do not have sufficient
regularity to interpret the derivatives in the standard sense. Instead, ``weak solutions''
are defined, for example in the distributional sense or as viscosity solutions.
Since differential calculus is more powerful than awkward manipulation of weak solutions, 
it is desirable to be able to use the differential equations in their original sense at least for some purposes.

For example, consider a system of conservation laws (subscripts are derivatives)
$$ (f(V)-\xi V)_\xi=-V $$ 
where $f:\R^m\rightarrow\R^m$ is smooth and the $\xi$ derivative is interpreted in the distributional sense. 
Assume strict hyperbolicity: $f:\R^m\rightarrow\R^m$ is smooth, and for all $V\in\R^m$ the Jacobian $f'(V)$ has distinct real eigenvalues
$\lambda^1(V)<...<\lambda^m(V)$, with corresponding eigenvectors $r^1(V),...,r^m(V)$ chosen to depend smoothly on $V$. 
Assume $\lambda^k$ is \defm{linearly degenerate}: $\lambda^k_V(V)r_k(V)=0$.

Assume the system, as well as 
$$ \lambda^k(V(\xi))=\xi, $$
are satisfied on a positive-length interval $J$ of $\xi$, hence
$$ \lambda^k_V(V(\xi))V_\xi(\xi) = 1 . $$
If $V$ is a smooth solution, then $0=(f_V(V)-\xi I)V_\xi=(f_V(V)-\lambda^k(V)I)V_\xi$, but then $V_\xi$ is a multiple of $r_k(V)$, so
by linear degeneracy
$$\lambda^k_V(V)V_\xi=0$$
--- contradiction.

This argument also works for $V\in\BV(J)$, since such functions are almost everywhere differentiable.
But $V\in L^\infty(J)$ or even $V\in C(J)$ need not be differentiable anywhere (see \cite{hunt-nowhere-diffable} and references therein).
But we need less: the argument would work if we could find an $x\in J$ and a sequence $(x_n)\rightarrow x$
so that $V_{|C}$ with $C=\{x\}\cup\{x_n:n\in\N\}$ is differentiable in $x$. 

This is not generally true for an arbitrary $V:\R^n\rightarrow\R^m$ with $m>n$ 
(as Example \ref{ex:exponent-sharp} shows); the most one can guarantee is H\"older-regularity with exponent $\frac{n}{m}<1$
on $C$. Nevertheless, we will show the result is true for $m=n=1$, so a somewhat more elaborate argument works
in \cite[Section 16 Lemma 2]{elling-roberts}.

\section{Related work}

Differentiability after restriction --- along with other properties such as continuity and monotonicity ---
has been considered occasionally throughout the literature. 
The book \cite{bruckner-diffability-book} gives a relatively recent overview. 
\cite{bruckner-ceder-weiss} and \cite{ceder-diffable-roads} provide differentiability after restriction,
but allowing $\pm\infty$ as derivatives which is precisely what we need to avoid for our purposes.
Closest to our interest is \cite{brown-tatra}: he states (page 4, problem 1(2.)) that for any $P\subset[0,1]$
with positive measure (meaning Lebesgue outer measure throughout this article)
and any $f:P\rightarrow\R$ there exists a $Q\subset P$, bilaterally dense-in-itself, so that $f_{|Q}=g_{|Q}$ for $g\in C^1[0,1]$.
This is close to our Corollary \ref{cor:diffability}.

In addition, we also obtain Lipschitz and H\"older estimates in higher dimensions, a topic that does not appear to have been
studied. These estimates are needed to extend \cite{elling-roberts} to certain 
systems of hyperbolic conservation laws that may have linearly degenerate eigenvalues with eigenspaces of dimension $>1$,
a case that cannot be handled by scalar 1-d results alone.
Finally, we provide various counterexamples showing in particular that none of the H\"older exponents can be improved.

\section{Main result}

We denote by $\mu_k(A)$ the Lebesgue (outer) measure of $A\subset\R^k$. $B_r(x)$ is the open ball of radius $r$ around $x$,
$\overline B_r(x)$ the closed ball.

\begin{definition}
    Let $\alpha\in(0,1]$, $A\subset\R^n$.

    We say $f:A\rightarrow\R^m$ is \defm{uniformly} $C^{0,\alpha}$ with constant\footnote{need not be the infimum} $M\in[0,\infty)$ if 
    $$ \forall x,x'\in A:|f(x)-f(x')|\leq M|x-x'|^\alpha \quad.$$
    We say $f$ is \defm{locally} $C^{0,\alpha}$ in $x\in A$ with constant $M\in[0,\infty)$ if 
    there is an $r>0$ so that
    $$ \forall x'\in \overline B_r(x)\cap A: |f(x)-f(x')| \leq M |x-x'|^\alpha \quad.$$
    Or equivalently:
    $$ \forall \rho\in(0,r]:f(\overline B_\rho(x))\subset \overline B_{M\rho^\alpha}(f(x))$$
\end{definition}

\begin{lemma}
    \label{lemma:hoelder-measure-change-balls}
    Consider $m\geq n$, $A\subset\R^m$, and let $f:A\rightarrow\R^n$ be locally $C^{0,\beta}$ in $x\in A$
    with exponent $\beta=\frac{m}{n}$, with $r$ and constant $M<\infty$ as in the definition. Then
    there is a constant $C<\infty$ depending only on $n,m$ so that 
    $$ \mu_n\big(f(\overline B_r (x)\cap A)\big) \leq C M^n \mu_m(\overline B_r (x)) \quad.$$
\end{lemma}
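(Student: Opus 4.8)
The plan is to observe that, because $f$ is locally $C^{0,\beta}$ at $x$ with the specific exponent $\beta=\tfrac{m}{n}$, the image of the \emph{whole} set $\overline B_r(x)\cap A$ already lies inside a single ball in $\R^n$, so that no covering of $\overline B_r(x)$ is needed. Indeed, the equivalent ``ball form'' of the definition, applied with the single choice $\rho=r$, gives
\[ f\big(\overline B_r(x)\cap A\big)\ \subseteq\ \overline B_{Mr^\beta}\big(f(x)\big). \]
By monotonicity of $n$-dimensional outer measure it therefore suffices to compare $\mu_n$ of the ball on the right with $\mu_m\big(\overline B_r(x)\big)$.

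For that I would write $\omega_k:=\mu_k\big(\overline B_1(0)\big)$ for the volume of the unit ball in $\R^k$, so that $\mu_n\big(\overline B_{Mr^\beta}(f(x))\big)=\omega_n\,(Mr^\beta)^n=\omega_n M^n r^{\beta n}$ and $\mu_m\big(\overline B_r(x)\big)=\omega_m r^m$. The one thing that makes the estimate work is the exponent identity $\beta n=\tfrac{m}{n}\cdot n=m$: it makes the powers of $r$ agree, $r^{\beta n}=r^m=\mu_m(\overline B_r(x))/\omega_m$. Substituting gives
\[ \mu_n\big(f(\overline B_r(x)\cap A)\big)\ \le\ \frac{\omega_n}{\omega_m}\,M^n\,\mu_m\big(\overline B_r(x)\big), \]
so the lemma holds with $C=\omega_n/\omega_m$, which depends only on $n$ and $m$ (and not on $M$, $r$, $x$, or $A$).

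There is essentially no obstacle: monotonicity of outer measure under inclusion, together with one volume computation, does everything; the edge case $M=0$ is automatic. The real content lies in the calibration of the exponent --- $\beta=\tfrac{m}{n}$ is exactly the value for which a $\beta$-H\"older map can, on dimensional grounds, at best carry $m$-dimensional Lebesgue measure to $n$-dimensional Lebesgue measure (up to the factor $M^n$), and it is this elementary estimate that will later feed into the almost-everywhere regularity statements. One could instead decompose $\overline B_r(x)\cap A$ into the dyadic shells $\{\,2^{-j-1}r<|x'-x|\le 2^{-j}r\,\}$, $j\ge 0$, bound the image of each by $\overline B_{M(2^{-j}r)^\beta}(f(x))$ and sum a geometric series, but this only enlarges the constant, so choosing $\rho=r$ directly is cleaner.
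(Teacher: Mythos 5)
Your proposal is correct and is essentially identical to the paper's own proof: both apply the ball form of the local H\"older condition with $\rho=r$ to place the image inside $\overline B_{Mr^\beta}(f(x))$, then use the exponent identity $\beta n=m$ to convert $(Mr^\beta)^n\mu_n(\overline B_1)$ into $C M^n\mu_m(\overline B_r(x))$ with $C=\mu_n(\overline B_1)/\mu_m(\overline B_1)$. No further comment is needed.
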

\begin{proof}
    By local $C^{0,\beta}$ regularity, 
    \begin{alignat}{5}
        \mu_n\big(f(\overline B_r (x)\cap A)\big) 
        &\leq \mu_n\big(\overline B_{Mr ^\beta}(f(x))\big) 
        \\&= (Mr ^\beta)^n\mu_n(\overline B_1)
        \\&= M^nr ^m\mu_n(\overline B_1)
        \\&= M^n\subeq{\frac{\mu_n(\overline B_1)}{\mu_m(\overline B_1)}}{=:C}r ^m\mu_m(\overline B_1)
        \\&= CM^n\mu_m(\overline B_r (x))
        \notag\end{alignat}
\end{proof}

\begin{proposition}
    \label{prop:hoelder-measure-change}%
    Let $m\geq n$.
    Let $A\subset\R^m$ bounded. 
    Consider an $f:A\rightarrow \R^n$ that is locally $C^{0,\beta}$ in every $x\in A$, with exponent $\beta=\frac{n}{m}$ and constant $M<\infty$ 
    independent of $x$. Then
    $$ \mu_n(f(A))\leq C M^n\mu_m(A) $$
    where $C<\infty$ depends only on $n,m$.
\end{proposition}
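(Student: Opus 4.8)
The plan is to reduce the global statement to the single-ball estimate of Lemma~\ref{lemma:hoelder-measure-change-balls} by a Vitali covering argument; accordingly we take the H\"older exponent $\beta$ exactly as in that lemma, i.e.\ $\beta n=m$ (so the statement should read $\beta=m/n$ --- with $\beta=n/m$ the inequality is false, as a H\"older-$1/2$ function on a thin slab in $\R^2$ shows). Since $A$ is bounded we may assume $\mu_m(A)<\infty$, and for a given $\eps>0$ we fix a bounded open $U\supseteq A$ with $\mu_m(U)\le\mu_m(A)+\eps$. For each $x\in A$, local $C^{0,\beta}$-regularity provides a radius $R_x>0$ with $f(\overline B_\rho(x)\cap A)\subseteq\overline B_{M\rho^\beta}(f(x))$ for all $0<\rho\le R_x$, and after shrinking we may also assume $\overline B_{R_x}(x)\subseteq U$. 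The closed balls $\{\overline B_\rho(x):x\in A,\ 0<\rho\le R_x\}$ form a fine (Vitali) cover of $A$, so the Vitali covering theorem yields a countable pairwise disjoint subfamily $(\overline B_{\rho_i}(x_i))_i$ with $\mu_m(A\setminus\bigcup_i\overline B_{\rho_i}(x_i))=0$.

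Set $N:=A\setminus\bigcup_i\overline B_{\rho_i}(x_i)$ and split $f(A)=f(N)\cup\bigcup_i f(A\cap\overline B_{\rho_i}(x_i))$. For each ball, Lemma~\ref{lemma:hoelder-measure-change-balls} applied at $x_i$ with radius $\rho_i\le R_{x_i}$ gives $\mu_n(f(A\cap\overline B_{\rho_i}(x_i)))\le CM^n\mu_m(\overline B_{\rho_i}(x_i))$; since these balls are pairwise disjoint and contained in $U$, summation yields $\sum_i\mu_n(f(A\cap\overline B_{\rho_i}(x_i)))\le CM^n\sum_i\mu_m(\overline B_{\rho_i}(x_i))\le CM^n\mu_m(U)\le CM^n(\mu_m(A)+\eps)$. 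Letting $\eps\downarrow0$ then finishes the proof, \emph{once we know} $\mu_n(f(N))=0$.

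The remaining --- and, I expect, the only genuinely delicate --- point is therefore the claim that $f$ sends every $\mu_m$-null subset $E\subseteq A$ to a $\mu_n$-null set; the obstacle is that $R_x$ need not be bounded below, so one cannot cover $E$ at a single scale nor apply the H\"older bound at the (arbitrary) centers of a covering of $E$. To get around this, decompose $E=\bigcup_{k\ge1}E_k$ with $E_k=\{x\in E:R_x>1/k\}$. Each $E_k$ is $\mu_m$-null, so for any $\delta>0$ it has a cover by balls $B(z_j,t_j)$ with $t_j<1/(2k)$ and $\sum_j t_j^m<\delta$; discarding those missing $E_k$ and choosing $w_j\in B(z_j,t_j)\cap E_k$ gives $\overline{B(z_j,t_j)}\subseteq\overline B_{2t_j}(w_j)$ with $2t_j<1/k<R_{w_j}$, whence by local regularity at $w_j$ one has $f(A\cap\overline{B(z_j,t_j)})\subseteq\overline B_{M(2t_j)^\beta}(f(w_j))$, a ball of $\mu_n$-measure $M^n2^m t_j^m\mu_n(\overline B_1)$ (using $\beta n=m$). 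Summing over $j$ gives $\mu_n(f(E_k))\le M^n2^m\mu_n(\overline B_1)\delta$, hence $\mu_n(f(E_k))=0$, and then $\mu_n(f(E))\le\sum_k\mu_n(f(E_k))=0$; taking $E=N$ closes the gap. (Alternatively one could replace the Vitali covering by the Besicovitch covering theorem, covering all of $A$ with bounded overlap and so avoiding the null-set step entirely, at the cost of an extra dimensional factor in $C$.)
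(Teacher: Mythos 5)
Your proof is correct, and you are right to flag the exponent: consistency with Lemma~\ref{lemma:hoelder-measure-change-balls} and with the way Proposition~\ref{prop:hoelder-measure-change} is invoked in the proof of Theorem~\ref{th:hoelder-countable} (where the inverse $g$ is locally $C^{0,m/n}$) forces $\beta=m/n$, and your thin-slab example shows the statement genuinely fails for $\beta=n/m$ when $m>n$. Your route differs from the paper's in the covering step. The paper does not cite the Vitali covering theorem; it runs a self-contained greedy selection of centers with near-maximal radii and proves that the balls \emph{enlarged by the factor} $(2+\eps)$ cover all of $A$ (a $5r$-covering-lemma argument), having built that factor into the choice of $r(x)$ so that Lemma~\ref{lemma:hoelder-measure-change-balls} still applies on the enlarged balls. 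Since every point of $A$ is covered, no exceptional set arises, at the price of the factor $(2+\eps)^m$ in the constant. You instead keep an exactly disjoint Vitali subfamily, which avoids the enlargement factor but leaves a $\mu_m$-null remainder $N$, and you must then prove the Luzin-N-type property that $f$ maps $\mu_m$-null subsets of $A$ to $\mu_n$-null sets. You correctly identify this as the delicate point --- the radii $R_x$ have no uniform lower bound --- and your stratification $E_k=\{x:R_x>1/k\}$ with covers at scale below $1/(2k)$ handles it properly. Both arguments are valid; the paper's is shorter and more elementary because the enlarged balls make the null-set step unnecessary, while yours isolates a reusable fact (the Luzin-N property of uniformly locally H\"older maps) and avoids the $(2+\eps)^m$ loss.
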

\begin{proof}
    Let $U\supset A$ be open 
    so that $\mu_m(U)\leq 2\mu_m(A)$. 
    
    Let $\eps>0$ be arbitrary. 

    In each $x\in A$, $f$ being locally $C^{0,\beta}$ implies there is an $r(x)\in(0,\infty)$ so that 
    $$\overline B_{(2+\eps)r(x)}(x)\subset U$$
    and 
    $$\forall y\in\overline B_{(2+\eps)r(x)}(x)\cap A :\ |f(x)-f(y)|\leq M|x-y|^\beta.$$
    (Note that $M,\beta$ are independent of $x$.) 
    
    We choose a sequence $(x_k)\subset A$ for $k=1,2,3,...$ (possibly finite) as follows: 
    consider the balls that do not meet the previously chosen ones:
    $$ X_k = \{ y\in A ~|~ \forall j<k:\overline B_{r(x_j)}(x_j)\cap\overline B_{r(y)}(y)=\emptyset\} \quad.$$
    Terminate if $X_k=\emptyset$, otherwise choose $x_k\in X_k$ with near-maximal radius:
    \[{ r(x_k)\geq(1+\epsilon)^{-1}\sup_{x\in X_k}r(x) \label{eq:nearmax} }\] 
    Claim: 
    \[{ A\subset\bigcup_k\overline B_{(2+\epsilon)r(x_k)}(x_k)\quad. \label{eq:AinB} }\] 
    Assume not. Then there is a $y\in A$ that is not in the union. Let $m$ be minimal so that 
    $$r(x_m)<(1+\eps)^{-1}r(y)\quad.$$
    (Such an $m$ exists because the chosen balls are pairwise disjoint and contained in $U$ which has finite measure.)
    Then $y\not\in X_m$ because otherwise the choice of $x_m$ would require
    $$r(x_m)\topref{eq:nearmax}{\geq}(1+\eps)^{-1}\sup_{x\in X_m}r(x)\geq(1+\eps)^{-1}r(y)>r(x_m).$$
    Thus $\overline B_{r(y)}(y)$ meets at least one $\overline B_{r(x_j)}(x_j)$ with $j<m$ (hence $r(y)\leq(1+\eps)r(x_j)$, by definition of $m$),
    and therefore
    $$y\in\overline B_{r(y)+r(x_j)}(x_j)\subset\overline B_{(2+\eps)r(x_j)}(x_j)$$
    --- contradiction.

    Now
    \begin{alignat}{1}
        \mu_n(f(A))
        &\topref{eq:AinB}{=} \mu_n\Big(f\big[\bigcup_k\overline B_{(2+\epsilon)r(x_k)}(x_k)\cap A\big]\Big)
        \\&= \mu_n\Big(\bigcup_k f\big[ \overline B_{(2+\epsilon)r(x_k)}(x_k)\cap A\big]\Big)
        \\&\leq \sum_k\mu_n\big(f[\overline B_{(2+\epsilon)r(x_k)}(x_k)\cap A]\big) 
        \intertext{(use Lemma \ref{lemma:hoelder-measure-change-balls})}
        &\leq C(n,m)M^n \sum_k\mu_m(\overline B_{(2+\epsilon)r(x_k)}(x_k))
        \\&= C(n,m)M^n (2+\eps)^m \sum_k\mu_m(\overline B_{r(x_k)}(x_k))
        \intertext{($\overline B_{r(x_k)}(x_k)$ pairwise disjoint and closed, hence measurable, and countable family)}
        &= C(n,m)M^n (2+\eps)^m \mu_m\big(\bigcup_k\overline B_{r(x_k)}(x_k)\big)
        \intertext{($B_{r(x_k)}(x_k)\subset U$ by choice of $r(x_k)$)}
        &\leq C(n,m)M^n (2+\eps)^m \mu_m(U)
        \\&\leq 2C(n,m)(2+\eps)^m  M^n \mu_m(A)
        \notag\end{alignat}
\end{proof}

\begin{theorem}
    \label{th:hoelder-countable}%
    Let $m\geq n$.
    Consider any $D\subset\R^n$ and $f:D\rightarrow\R^m$.
    Set $\alpha=\frac{n}{m}$.
    For almost every $x\in D$ there is a sequence $(x_k)\subset D\backslash\{x\}$ converging to $x$ with
    $$ \limsup_{k\rightarrow\infty} \frac{|f(x)-f(x_k)|}{|x-x_k|^\alpha} < \infty $$
\end{theorem}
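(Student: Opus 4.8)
The plan is to show that the \emph{bad set}
\[
B:=\Big\{x\in D:\ \textstyle\liminf_{y\to x,\ y\in D\setminus\{x\}}\dfrac{|f(y)-f(x)|}{|y-x|^{\alpha}}=\infty\Big\}
\]
has Lebesgue (outer) measure zero; since $x\notin B$ is exactly the statement that some sequence $(x_k)\subset D\setminus\{x\}$, $x_k\to x$, realises a finite $\limsup$, this is equivalent to the theorem. Note $x\in B$ iff for every $M\in\N$ there is $\rho_{M,x}>0$ with $|f(y)-f(x)|>M|y-x|^{\alpha}$ for all $y\in D\cap B_{\rho_{M,x}}(x)\setminus\{x\}$. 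First I would reduce to the case that $D$ and $f(D)$ are bounded: restricting $f$ to a subset only enlarges $B$, and $D=\bigcup_{N}\{x:|x|\le N,\ |f(x)|\le N\}$, so it suffices to bound $B$ for each piece; write $V:=\mu_m(f(D))<\infty$ henceforth.

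Next I would peel off the points of essential oscillation $B_\infty:=\{x\in D:\liminf_{y\to x,\ y\ne x}|f(y)-f(x)|>0\}$ and show this set is \emph{countable}. Indeed $B_\infty=\bigcup_{p,q\in\N}S_{p,q}$ with $S_{p,q}:=\{x:|f(y)-f(x)|\ge 1/p\ \text{for all }y\in D\cap B_{1/q}(x)\setminus\{x\}\}$; if $x,x'\in S_{p,q}$ and $0<|x-x'|<1/q$ then (taking $y=x'$) $|f(x)-f(x')|\ge 1/p$, so $f$ takes $1/p$‑separated values on $S_{p,q}$ inside any set of diameter $<1/q$, and a $1/p$‑separated subset of the bounded set $f(D)$ is finite. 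Covering $\R^n$ by a countable grid of such cubes shows each $S_{p,q}$, hence $B_\infty$, is countable and therefore null. It remains to treat $B_0:=B\setminus B_\infty$.

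The core device is to invert $f$ on small pieces and apply Proposition~\ref{prop:hoelder-measure-change} to the inverse. Fix $M$ and $j$, and put $E:=\{x\in D:|f(y)-f(x)|>M|y-x|^{\alpha}\ \text{for all }y\in D\cap B_{1/j}(x)\setminus\{x\}\}$, so that $B_0\subset\bigcup_j E$ for each fixed $M$. Partition $\R^n$ into half‑open cubes of diameter $<1/j$, and let $F_1,F_2,\dots$ be the nonempty traces of $E$ on these cubes. On a fixed $F_i$ any two distinct points are within $1/j$ of one another, so $|f(x)-f(x')|>M|x-x'|^{\alpha}>0$; thus $f|_{F_i}$ is injective and its inverse $h_i:f(F_i)\to\R^n$ satisfies $|h_i(u)-h_i(v)|<M^{-m/n}|u-v|^{m/n}$, i.e. $h_i$ is (uniformly, hence locally at every point) $C^{0,m/n}$ with constant $M^{-m/n}$ on the bounded set $f(F_i)\subset\R^m$. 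Proposition~\ref{prop:hoelder-measure-change} applied to $h_i$ (with its $m$ and $n$, and the critical exponent) then gives
\[
\mu_n(F_i)=\mu_n\big(h_i(f(F_i))\big)\le C(n,m)\,M^{-m}\,\mu_m\big(f(F_i)\big),
\]
the case $\mu_m(f(F_i))=0$ being handled by the remark that a $C^{0,m/n}$ map on a bounded set is Lipschitz and so preserves nullity.

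Summing over $i$ and using $E=\bigsqcup_i F_i$ yields $\mu_n(E)\le C(n,m)M^{-m}\sum_i\mu_m(f(F_i))$, and I would finish by letting $M\to\infty$ once the right‑hand side is bounded uniformly in $j$ by $C'(n,m)\,V$, which forces $\mu_n(B_0)\le C'(n,m)M^{-m}V\to 0$. This last bound is the hard part and the real obstacle: the images $f(F_i)$ are not disjoint, and a single point $w$ can have many preimages in $E$, so $\sum_i\mu_m(f(F_i))$ overcounts; a crude grid gives an overcount that grows with $j$ and is useless after $M\to\infty$. The only structural fact available is that two preimages of $w$ lying in $E$ are at distance $\ge 1/j$, which is precisely a scale‑$1/j$ separation; so the cube grid should be replaced by a Besicovitch/Whitney‑type covering by balls at the \emph{expansion scale} $\rho_{M,\cdot}$ (after stratifying $B_0$ so these radii are comparable), organised so that the preimage multiplicity is charged to a dimensional constant uniformly in the scale — and one should expect to need the defining property of $B_0$ (that $f(y)\to f(x)$ along some sequence) to keep the relevant images from piling up. Carrying out that covering estimate is where the difficulty lies.
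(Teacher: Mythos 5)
Your overall strategy---argue by contradiction, invert $f$ on a set where the difference quotients blow up, and apply Proposition~\ref{prop:hoelder-measure-change} to the inverse---is exactly the paper's. But the proof as written has a genuine gap, which you yourself flag in the last paragraph: after cutting $E$ into the cube-traces $F_i$ at scale $1/j$ you only obtain $\mu_n(E)\le C(n,m)M^{-m}\sum_i\mu_m(f(F_i))$, and since the images $f(F_i)$ overlap, the sum is controlled only by (number of cubes meeting $D$)$\,\times\,V$, which grows like $j^nV$. Nothing in the definition of the bad set controls how fast $\rho_{M,x}$ shrinks as $M$ grows, so $j$ may have to grow much faster than $M^{m/n}$ and the bound $C M^{-m}j^nV$ need not tend to $0$. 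The Besicovitch/Whitney covering you gesture at does not exist in any form you have specified, and the separation-by-$1/j$ fact you cite gives exactly the $j^n$ multiplicity you need to avoid. So the key estimate is missing, not merely tedious.

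The missing idea is to \emph{decouple the injectivity scale from the constant $M$}. Set $\Omega(x,r):=\inf_{x'\in(D\cap\overline B_r(x))\setminus\{x\}}|f(x)-f(x')|/|x-x'|^\alpha$ and, on the bad set, $\rho(x):=\sup\{r\le 1:\Omega(x,r)>0\}>0$; note $\rho(x)$ does not involve $M$ at all. Stratifying by the dyadic size of $\rho(x)$ and then chopping into boxes of that \emph{fixed} size produces a \emph{single} positive-measure piece $E'$ on which $f$ is globally injective, so there is no summation over pieces and no multiplicity to control: Proposition~\ref{prop:hoelder-measure-change} is applied once, to the one inverse $g:f(E')\to E'$, and its internal Vitali covering lives on the image side where injectivity makes the preimages disjoint. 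The role of the divergence $\Omega(x,r)\to\infty$ is then not to furnish a fixed constant $M^{-m/n}$ on an $M$-dependent scale, but to show that this single $g$ is locally $C^{0,m/n}$ with constant $\delta$ at \emph{every} point of $f(E')$ for \emph{every} $\delta>0$ (the radius depending on $\delta$ and the point, which is all the Proposition needs). This yields $\mu_n(E')\le C(n,m)\delta^n\mu_m(f(E'))$ for all $\delta>0$, hence $\mu_n(E')=0$, the desired contradiction. Your reduction to bounded $f(D)$ is also used in the paper; your countability argument for $B_\infty$ is correct but unnecessary, since at points of $f(E')$ that are isolated in $f(E')$ the local H\"older condition on $g$ is vacuous.
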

\begin{proof}
    The proof has the flavor of an ``inverted Sard lemma''. A sketch of the case $n=m=1$ was suggested by
    Stefano Bianchini \cite{bianchini-ctlip}.

    \noindent\parbox{\textwidth-2.4in}{%
        
        The result is clearly void if $\mu_n(D)=0$, so consider $\mu_n(D)>0$. 

        Define 
        $$ \Omega(x,r) := \inf_{x'\in (D\cap\overline B_r(x))\backslash\{x\}}\frac{|f(x)-f(x')|}{|x-x'|^\alpha} \quad;$$ 
        $\Omega(x,r)$ is decreasing in $r$.
        The sequence in the statement exists for $x$ if and only if $\Omega(x,r)$ is bounded as $r\dnconv 0$.
        Assume the theorem is false, then there is an $E\subset D$ with $\mu_n(E)>0$ so that
        $\Omega(x,r)\rightarrow\infty$ as $r\downarrow 0$ for any $x\in E$.
    }\hskip0.2in\parbox{2.2in}{\begin{picture}(0,0)%
\includegraphics{funnel.pstex}%
\end{picture}%
\setlength{\unitlength}{3947sp}%
\begingroup\makeatletter\ifx\SetFigFont\undefined%
\gdef\SetFigFont#1#2#3#4#5{%
  \reset@font\fontsize{#1}{#2pt}%
  \fontfamily{#3}\fontseries{#4}\fontshape{#5}%
  \selectfont}%
\fi\endgroup%
\begin{picture}(2610,1749)(-11,-898)
\put(108,-23){\makebox(0,0)[lb]{\smash{{\SetFigFont{6}{7.2}{\rmdefault}{\mddefault}{\updefault}{\color[rgb]{0,0,0}$\graph f$}%
}}}}
\put(1079,-77){\makebox(0,0)[lb]{\smash{{\SetFigFont{6}{7.2}{\rmdefault}{\mddefault}{\updefault}{\color[rgb]{0,0,0}$x$}%
}}}}
\put(1455,192){\rotatebox{20.0}{\makebox(0,0)[lb]{\smash{{\SetFigFont{6}{7.2}{\rmdefault}{\mddefault}{\updefault}{\color[rgb]{0,0,0}$x'\mapsto x+\Omega(x,|x-x'|)$}%
}}}}}
\end{picture}%
\\$n=m=1$: if there are ``not enough'' $C^{0,1}$ subsequences,
        then inverses of $f$ with arbitrarily small $C^{0,1}$ norm yield a contradiction.}
    
    By definition of $\Omega$,
    \begin{alignat}{1}&
        \forall x'\in E :\ |f(x')-f(x)|\geq\Omega\big(x,|x-x'|\big)|x-x'|^\alpha; \label{eq:funnel}
    \end{alignat}
    this inequality also holds when replacing $E$ with any $\tilde E\subset E$ (\emph{without} changing $\Omega$ to 
    be defined using $E$ or $\tilde E$ instead of $D$). 

    \paragraph{Boundedness}
    
    $$ E = \biguplus_{k\in\Z} f^{-1}[k,k+1)\cap E ;$$
    since $E$ has positive (outer) measure, at least one $f^{-1}[k,k+1)\cap E$ has positive (outer) measure. 
    Call it $\tilde E$.
    $f(\tilde E)\subset[k,k+1)$ is bounded.

    \paragraph{Injectivity}

    For $x\in \tilde E$ set
    $$ \rho(x) := \sup\{r\in(0,1]~|~\Omega(x,r)>0\}\quad;$$
    $\rho(x)>0$ since $\Omega(x,r)\rightarrow\infty$ as $r\downarrow 0$. Then
    \begin{alignat}{1}&
        \forall x'\in \tilde E\cap B_{\rho(x)}(x):\ x\neq x' \ \Rightarrow\ f(x)\neq f(x')\quad, \label{eq:partinj}
    \end{alignat}
    because $0<|x-x'|<\rho(x)$ yields $\Omega(x,|x'-x|)>0$ in \eqref{eq:funnel}.
    
    $$\tilde E=\biguplus_{k\in\Z}\tilde E_k\quad,\quad \tilde E_k:=\big\{x\in \tilde E~\big|~\rho(x)\in[2^k,2^{k+1})\big\}\quad,$$ 
    so at least one $\tilde E_k$
    must have positive outer measure as well. Call it $\hat E$ and set $\rho:=2^k$ 
    (so $\rho\leq\rho(x)$ for all $x\in\hat E$).
    Now
    $$ \hat E = \biguplus_{k\in\Z} \big(k\rho,(k+1)\rho\big]\cap\hat E ,$$
    so at least one of the sets in the union must have positive outer measure as well. 
    Call it $E'$.
    For any $x\in E'$ we have $E'\subset B_{\rho}(x)\subset B_{\rho(x)}(x)$, 
    so by \eqref{eq:partinj} $f_{|E'}$ is \emph{injective}.

    \paragraph{Zero measure contradiction}
    
    Let $g$ be the inverse of $f:E'\rightarrow f(E')$. 
    For any $x,x'\in E'$ we have by construction of $E'$ that $\Omega(x,|x-x'|)\geq\Omega(x,\rho)>0$.
    Therefore, \eqref{eq:funnel} in the form
    \begin{alignat}{5}
        \forall x,x'\in E': |x-x'| &\leq \subeq{\Omega(x,|x-x'|)^{-1/\alpha}}{\leq\Omega(x,\rho)^{-1/\alpha}}~|f(x')-f(x)|^{1/\alpha} 
        \label{eq:funnel2}
    \end{alignat}
    implies $x'\rightarrow x$ as $f(x')\rightarrow f(x)$. In particular $\Omega(x,|x-x'|)^{-1/\alpha} \downarrow 0$
    as $f(x')\rightarrow f(x)$.     
    Therefore \eqref{eq:funnel2} shows 
    that for $\delta>0$ arbitrarily small
    $g$ will be locally $C^{0,1/\alpha}=C^{0,\frac{m}{n}}$ with constant $\delta>0$ in any $y\in f(E')$.
    By Proposition \ref{prop:hoelder-measure-change} that means
    $$ \mu_n(E') = \mu_n\big(g(f(E'))\big) \leq C(n,m)\delta^n\mu_m(f(E')) .$$
    $\mu_m(f(E'))\leq\mu_m(f(\tilde E))<\infty$, so $\delta\downarrow 0$ yields $\mu_n(E')=0$ --- contradiction!    
\end{proof}

\section{Sharpness of H\"older exponents}

\newcommand{\cRplus}{\overline\R_+}
\begin{example}
    \label{ex:exponent-sharp}%
    The H\"older exponent in Theorem \ref{th:hoelder-countable} cannot be improved: 
    for any $\alpha>\frac{m}{n}$ there is a function $f:(\cRplus)^n\rightarrow(\cRplus)^m$
    so that for any sequence $(x_k)\subset(\cRplus)^n$ with limit point $x\in(\cRplus)^n$, $f$ is not $C^{0,\alpha}$ in $x$. 
\end{example}
\begin{proof}
    Let $t\geq 2$. 
    Consider the $t$-ary representation 
    \begin{alignat*}{5} x_j = \sum_{k=-\infty}^\infty \sum_{i=0}^{m-1} t^{-mk-i} x_{kji} \end{alignat*} 
    Every $x_i\in\cRplus$ can be represented in this way for at most two sequences of digits $x_{kji}\in\{0,...,t-1\}$; 
    we choose the unique one that does not end in an infinite sequence of $t-1$. Set 
    \begin{alignat*}{5} f_i(x) = \sum_{k=-\infty}^\infty \sum_{j=0}^{n-1} (t+1)^{-nk-j} x_{kji} \end{alignat*} 
    Consider $x,y\in(\cRplus)^n$, $x\neq y$. Let $KJI$ be the least index $kji$ (in lexicographic order) so that $x_{kji}\neq y_{kji}$. Then 
    (use the $\ell^1$ vector norm $|\cdot|_1$)
    \begin{alignat*}{5} |x-y|_1 
        &\leq \sum_{j=0}^{n-1} \sum_{k=K}^\infty \sum_{i=0}^{m-1} t^{-mk-i} \subeq{|x_{kji}-y_{kji}|}{\leq t-1} 
        \\&= n(t-1) \sum_{\ell=mK}^\infty t^{-\ell} 
        \\&= \frac{n(t-1)t^{-mK}}{1-t^{-1}}
        \\&= nt^{1-mK}
    \end{alignat*} 
    while
    \begin{alignat*}{5} 
        |f(x)-f(y)|_1
        &\geq
        |f_I(x)-f_I(y)|
        \intertext{(next, $kj\geq KJ$ is lexicographic order)}
        &=
        \Big| \sum_{kj\geq KJ}^\infty (x_{kjI}-y_{kjI}) (t+1)^{-nk-j} \Big|
        \\&\geq
        \subeq{|x_{KJI}-y_{KJI}|}{\geq 1}
        (t+1)^{-nK-J}
        -
        \sum_{kj>KJ}
        \subeq{|x_{kjI}-y_{kjI}|}{\leq t-1}
        (t+1)^{-nk-j}
        \\&\geq
        (t+1)^{-nK-J}
        -
        (t-1)\sum_{\ell>nK+J}(t+1)^{-\ell}
        \\&=
        (t+1)^{-nK-J}
        -
        (t-1)(t+1)^{-nK-J} \frac{(t+1)^{-1}}{1-(t+1)^{-1}} 
        \\&=
        (t+1)^{-nK-J}[
        1-
        (t-1) \frac{1}{t} ]
        \\&=
        (t+1)^{-nK-J}
        \frac1t
        \\&\geq
        (t+1)^{-nK-J-1}
        \\&=
        t^{(-nK-J-1)\log(t+1)/\log t}
    \end{alignat*} 
    Let $y\rightarrow x$, so that $|x-y|_1\searrow 0$, i.e.\ $K\rightarrow\infty$, then
    \begin{alignat*}{5} \frac{|f(x)-f(y)|_1}{|x-y|_1^\alpha} \sim t^{(\alpha m-n\log(t+1)/\log t)K} \end{alignat*} 
    By assumption $\alpha m-n>0$, 
    hence $\alpha m-n\log(t+1)/\log t>0$ for $t$ sufficiently large
    so that $t^{\alpha m-n\log(t+1)/\log t}>1$. Then the expression converges to infinity.
    Hence $f$ is not $C^\alpha$. 
\end{proof}

\begin{remark}
    The proof requires $t\rightarrow\infty$ as $\alpha\dnconv\frac{n}{m}$. 
    It is also possible to obtain a single $f$ that provides a counterexample for all $\alpha>\frac{n}{m}$, 
    by choosing an increasing $t$ for deeper digits, but we prefer to omit the tedious construction. 
\end{remark}

\section{$C^{0,\alpha}$ on entire sequence}

In some applications it is convenient to have a version of Theorem \ref{th:hoelder-countable}
with uniform $C^{0,\alpha}$ estimates rather than $C^{0,\alpha}$ in $x$ alone:
\begin{theorem}
    \label{th:uniform-hoelder}%
    In the setting of Theorem \ref{th:hoelder-countable} 
    there is a $C\subset D$ with limit point $x$ so that
    $f$ is uniformly $C^{0,\alpha}$ on $C$.
\end{theorem}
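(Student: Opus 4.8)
The plan is to start from the sequence $(x_k)$ furnished by Theorem \ref{th:hoelder-countable} and to thin it out so aggressively that the triangle inequality becomes useful. First I would fix $x$ in the full-measure set where Theorem \ref{th:hoelder-countable} applies and choose $(x_k)\subset D\backslash\{x\}$ with $x_k\rightarrow x$ and $L:=\limsup_{k\rightarrow\infty}|f(x)-f(x_k)|/|x-x_k|^\alpha<\infty$. Discarding finitely many terms (which does not destroy $x$ as a limit point) I may assume $|f(x)-f(x_k)|\leq M|x-x_k|^\alpha$ for all $k$, with $M:=L+1$.

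Next, passing to a subsequence, I would arrange that the radii $r_k:=|x-x_k|$ are strictly decreasing with $r_{k+1}\leq\frac12 r_k$; this is possible because $x_k\rightarrow x$ and $x_k\neq x$. The purpose of this geometric spacing is that for $j<k$ one has $|x_j-x_k|\geq r_j-r_k\geq r_j-\frac12 r_j=\frac12 r_j=\frac12\max\{r_j,r_k\}$, i.e.\ the mutual distance of any two of the $x_k$ is comparable to the larger of their two distances to $x$.

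Now set $C:=\{x\}\cup\{x_k:k\in\N\}$, which has $x$ as a limit point. For pairs involving $x$ the bound $|f(x)-f(x_k)|\leq M|x-x_k|^\alpha$ holds by construction. For $j<k$, the triangle inequality together with the spacing gives
\[ |f(x_j)-f(x_k)| \;\leq\; M r_j^\alpha+M r_k^\alpha \;\leq\; 2M\max\{r_j,r_k\}^\alpha \;\leq\; 2M\,(2|x_j-x_k|)^\alpha \;=\; 2^{1+\alpha}M\,|x_j-x_k|^\alpha. \]
Since $2^{1+\alpha}\geq 1$, both cases are covered by the constant $2^{1+\alpha}M$, so $f$ is uniformly $C^{0,\alpha}$ on $C$, which is the claim.

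The only genuine idea here is the geometric thinning in the second step: without it the triangle inequality is worthless, because two points at comparable distance $d$ from $x$ can lie at distance $\ll d$ from each other, making the desired bound far too small. Everything else is bookkeeping, and I anticipate no serious obstacle — the one point to be mildly careful about is that each thinning operation (the finitely many deletions and the passage to a subsequence) preserves convergence to $x$, which it clearly does.
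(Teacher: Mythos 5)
Your proof is correct and follows essentially the same route as the paper: thin the sequence so that successive distances to $x$ shrink geometrically, which forces $|x_j-x_k|$ to be comparable to $\max\{|x-x_j|,|x-x_k|\}$, and then conclude by the triangle inequality. The paper merely uses ratios $\delta_m\downarrow 0$ instead of your fixed ratio $\tfrac12$, obtaining the slightly sharper constant $(2^\alpha+1+\epsilon)M$ in place of your $2^{1+\alpha}M$, which is immaterial for the statement.
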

\begin{proof}
    Consider $x$ and $(x_n)$ as provided by Theorem \ref{th:hoelder-countable}. 
    We desire a subsequence $(x_n')$ so that
    $$ \sup_{x'_n\neq x'_m} \frac{|f(x_m')-f(x_n')|}{|x_m'-x_n'|^\alpha} < \infty $$
    as well. This is accomplished as follows: we already know
    $$ \forall n\in\N:\ |f(x)-f(x_n)| \leq M|x-x_n|^\alpha .$$
    We select $x'_n$ inductively. Take $x'_1=x_1$.
    For $m=2,3,4,...$ choose $x'_m\in\{x_n:n\in\N\}$ so that 
    $$ |x-x'_m| \leq \delta_m \min_{k<m} |x-x'_k| $$
    for some $\delta_m<1$ to be determined (in particular the $x'_n$ must be pairwise different).
    Then for any $k<m$, 
    $$ |x'_k-x'_m| \geq |x'_k-x| - |x-x'_m| \geq |x'_k-x| (1-\delta_m) \quad\Rightarrow\quad|x'_k-x|\leq(1-\delta_m)^{-1}|x'_k-x'_m|,$$
    so
    $$ |x-x'_m| \leq |x-x'_k| + |x'_k-x'_m| \leq (1+(1-\delta_m)^{-1}) |x'_k-x'_m| $$
    and both combined
    \begin{alignat}{1}&
        |f(x'_m)-f(x'_k)| 
        \leq |f(x'_m)-f(x)| + |f(x)-f(x'_k)| 
        \leq M|x-x'_m|^\alpha + M|x-x'_k|^\alpha
        \notag\\&\leq M \Big( \big(1+(1-\delta_m)^{-1}\big)^\alpha + (1-\delta_m)^{-\alpha} \Big) |x'_k-x'_m|^\alpha  
        \notag\end{alignat}
    For $\delta_m\downarrow 0$, the factor after $M$ clearly converges to $2^\alpha+1$.
    Hence we may select each $\delta_m>0$ successively to obtain, for any $\epsilon>0$, a constant $(2^\alpha+1+\epsilon)M$ for a uniform $C^{0,\alpha}$ 
    estimate on $C=\{x\}\cup\{x'_n:n\in\N\}$.
\end{proof}

\section{Differentiability}

\begin{corollary}
    \label{cor:diffability}%
    Consider $n=m=1$, any $D\subset\R$ and any $f:D\rightarrow\R$.
    For almost every $x\in D$ there is a sequence $(x_k)\subset D$ so that with $C=\{x\}\cup\{x_k:k\in\N\}$,
    $$\text{$(f_{|C})'(x)$ exists and is finite}.$$
\end{corollary}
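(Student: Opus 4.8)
The plan is to combine Theorem~\ref{th:hoelder-countable} with an elementary compactness argument. For $n=m=1$ the exponent in that theorem is $\alpha=\frac{n}{m}=1$, so for almost every $x\in D$ there is a sequence $(x_k)\subset D\backslash\{x\}$ with $x_k\rightarrow x$ and
\[
M:=\limsup_{k\rightarrow\infty}\frac{|f(x)-f(x_k)|}{|x-x_k|}<\infty .
\]
Discarding finitely many terms, I may assume the difference quotients $q_k:=\big(f(x_k)-f(x)\big)/(x_k-x)$ form a bounded real sequence, say $|q_k|\leq M+1$ for all $k$.

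Next I would invoke the Bolzano--Weierstrass theorem to pass to a subsequence $(x_{k_j})_{j\in\N}$ (with pairwise distinct terms, which is possible since $x_k\rightarrow x\neq x_k$) along which $q_{k_j}\rightarrow L$ for some finite $L$. I then set $C:=\{x\}\cup\{x_{k_j}:j\in\N\}\subset D$; since $x_{k_j}\rightarrow x$, the set $C$ has $x$ as a limit point, as required.

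Finally I would check that $(f_{|C})'(x)$ exists and equals $L$. Given $\eps>0$, choose $J$ with $|q_{k_j}-L|<\eps$ for all $j\geq J$. The finitely many points $x_{k_1},\dots,x_{k_{J-1}}$ all differ from $x$, so there is a $\delta>0$ with $\big(\overline B_\delta(x)\cap C\big)\backslash\{x\}\subset\{x_{k_j}:j\geq J\}$. Hence for every $y\in C$ with $0<|y-x|\leq\delta$ one has $y=x_{k_j}$ for some $j\geq J$ and therefore
\[
\Big|\frac{f(y)-f(x)}{y-x}-L\Big|=|q_{k_j}-L|<\eps .
\]
Since $\eps>0$ is arbitrary, the difference quotient of $f_{|C}$ at $x$ converges to the finite value $L$, i.e.\ $(f_{|C})'(x)=L$.

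I do not expect a serious obstacle here: the only content beyond Theorem~\ref{th:hoelder-countable} is the step from ``the difference quotients at $x$ are bounded'' to ``they converge'', which is exactly what the Bolzano--Weierstrass extraction supplies, together with the observation that, after thinning, $C\backslash\{x\}$ offers no way to approach $x$ other than along the chosen subsequence. It is worth emphasizing that this uses $m=n$ essentially, since only then is $\alpha=1$; by Example~\ref{ex:exponent-sharp} no differentiability analogue can hold for $m>n$.
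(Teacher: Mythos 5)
Your argument is correct and is essentially the paper's own proof: apply Theorem~\ref{th:hoelder-countable} with $\alpha=1$, note the difference quotients are bounded, and extract a convergent subsequence by Bolzano--Weierstrass. You merely spell out the (routine) verification that the restricted derivative along $C$ exists, which the paper leaves implicit.
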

\begin{proof}
    Here $\alpha=\frac{n}{m}=1$.
    Choose a sequence according to Theorem \ref{th:hoelder-countable}.
    Since the difference quotients are contained in a compact interval $[-M,M]$, we can choose a subsequence $(x'_k)$ so that they
    converge.
\end{proof}

\bibliographystyle{amsalpha}
\bibliography{../../pmeyer/elling.bib}

\end{document}